\documentclass[12pt]{article}
\usepackage{amsmath}
\usepackage{amsthm}
\usepackage{amsfonts}
\usepackage{amssymb}
\usepackage{graphicx}
\usepackage{tikz}
\usepackage{tikz-cd}
\usepackage{mathtools}
\usepackage{mathrsfs}
\usepackage{amsrefs}
\usepackage[none]{hyphenat}
\usepackage{hyperref}
\usepackage[capitalise, nameinlink]{cleveref-usedon}
\setlength{\parindent}{0pt}

\newtheorem{theorem}{Theorem}[section]
\newtheorem{corollary}[theorem]{Corollary}
\newtheorem{lemma}[theorem]{Lemma}
\newtheorem{prop}[theorem]{Proposition}

\newtheorem{remark}[theorem]{Remark}
\theoremstyle{definition}
\newtheorem{definition}[theorem]{Definition}

\title{An Algebraic Proof of Hrushovski's Theorem}
\author{Thomas Wisson}
\date{}

\begin{document}
\maketitle

\begin{abstract}
In his paper on the Mordell-Lang conjecture, Hrushovski employed techniques from model theory to prove the function field version of the conjecture. In doing so he was able to answer a related question of Voloch, which we refer to henceforth as Hrushovski's theorem. In this paper we shall give an alternative proof of said theorem in the characteristic $p$ setting, but using purely algebro-geometric ideas.
\end{abstract}

\section{Introduction}
Let $U$ be a smooth curve over $k=\bar{\mathbb{F}}_{p}$, and let $K$ be its function field. Then take an abelian variety $A$ over $K$ with $K/k$-trace equal to zero, and a finitely generated subgroup $\Gamma\subseteq A(K)$. Given a discrete valuation $v$ on $K$, recall the notion of $v$-adic distance $d_{v}(Y,P)$ between a subvariety $Y$ of $A$, and a point $P$ on $A$. We can also define the local height $\lambda_{v}(Y,P):=-\text{log}~d_{v}(Y,P)$.
\begin{remark}
\label{rem:Subvar}
For us, a subvariety of $A$ can be any reduced closed subscheme of $A_{K'}$, where $K'$ is any algebraic field extension of $K$.
\end{remark}
\begin{definition}
\label{def:LinSub}
A subvariety $Y\subset A$ is said to be linear if it is a finite union of translates of abelian subvarieties of $A$.
\end{definition}
\begin{theorem}[Hrushovski's Theorem]
\label{th:Hrush}
Let $X\subset A$ be a subvariety defined over $K$. There exists a linear subvariety $Y\subset X$ which is also defined over $K$, such that for each discrete valuation $v$ of $K$ there is a constant $C_{v}$ for which
\begin{align*}
\lambda_{v}(X,P)\leq \lambda_{v}(Y,P)+C_{v}
\end{align*}
for all points $P\in\Gamma$.
\end{theorem}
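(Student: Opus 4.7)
The approach I would take is to regard Hrushovski's theorem as a local refinement of the function-field Mordell--Lang theorem. In the trace-zero characteristic $p$ setting, Mordell--Lang already asserts that $X(\bar K)\cap\Gamma$ is contained in a finite union of translates of abelian subvarieties lying inside $X$, and this immediately produces a candidate for $Y$. The content of \Cref{th:Hrush} beyond Mordell--Lang is then that $\Gamma$-points which only approach $X$ $v$-adically must approach $Y$ at essentially the same rate; my plan is to establish this by working on the N\'eron model of $A$ over $U$ and analysing the formal neighbourhood of the closure of $X$ at each closed point.

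Concretely, I would first take $Y$ to be the minimal linear subvariety of $X$, defined over $K$, containing $X(\bar K)\cap\Gamma$; existence follows from Mordell--Lang, and minimality together with the fact that $\Gamma\subseteq A(K)$ is Galois-stable forces $Y$ to descend to $K$. Next, possibly shrinking $U$, let $\mathcal A$ be the N\'eron model of $A$ over $U$ and write $\bar X,\bar Y$ for the scheme-theoretic closures of $X,Y$ inside $\mathcal A$. The local height $\lambda_v$ can then be read off from the $v$-adic order of vanishing of the defining ideal of $\bar X$ (respectively $\bar Y$) along the section of $\mathcal A$ given by $P\in\Gamma$, so the desired inequality reduces to showing that these orders differ by at most a constant $C_v$. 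The key geometric input I would exploit is the group law on $\mathcal A$: if $P$ is very $v$-adically close to $X$, then translating by nearby elements of $Y\cap\Gamma$ should show that, to first order, $P$ differs from a point of $X\cap\Gamma\subseteq Y$ only in tangent directions along $Y$; iterating this, combined with the finite generation of $\Gamma$, should push $P$ into a bounded-order formal neighbourhood of $\bar Y$.

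The principal obstacle I anticipate is making this tangent-space and limiting argument rigorous and uniform. Mordell--Lang only controls \emph{exact} $\Gamma$-points of $X$, whereas the theorem concerns points that merely approach $X$, and there is no obvious algebraic certificate forcing such points to lie near $Y$; Hrushovski's original argument replaces this certificate with a model-theoretic compactness principle, which I shall need to substitute by something purely geometric---most naturally by using the trace-zero hypothesis to rule out ``formal $k$-directions'' in $A$ that would otherwise give unbounded families of $\Gamma$-approximations to $X$ transverse to $Y$. A secondary difficulty is that the constant $C_v$ must be finite at \emph{every} $v$, including places where $\mathcal A$ has bad reduction; I expect this to require a direct analysis of the component group of $\mathcal A_v$, handled by reducing, after a bounded translation within $\Gamma$, to the identity component of the N\'eron model.
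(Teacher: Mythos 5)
There is a genuine gap, and it sits exactly where you locate your ``principal obstacle.'' Your plan is to obtain $Y$ from Mordell--Lang as the minimal linear subvariety containing $X(\bar K)\cap\Gamma$, and then to argue that $\Gamma$-points merely approaching $X$ $v$-adically must approach this $Y$. Two problems. First, this candidate $Y$ can be wrong: the theorem must handle points $P\in\Gamma$ that come arbitrarily $v$-adically close to $X$ without $X(K)\cap\Gamma$ being correspondingly large --- in the extreme case $X(\bar K)\cap\Gamma$ could be finite or empty while $\Gamma$-points still accumulate on $X$ at some place, and then your minimal $Y$ is too small for the inequality $\lambda_v(X,P)\le\lambda_v(Y,P)+C_v$ to hold. (A posteriori, \cref{cor:HrushPnt} shows the correct $Y$ satisfies $Y(K)\cap\Gamma=X(K)\cap\Gamma$, but it need not be the Zariski closure of that set.) Second, the step from ``exact $\Gamma$-points'' to ``approximate $\Gamma$-points'' is the entire content of the theorem beyond Mordell--Lang, and your sketch replaces it with a heuristic: translating $P$ by ``nearby elements of $Y\cap\Gamma$'' presupposes that such nearby elements exist, which is essentially what is to be proved, and the phrases ``should push $P$ into'' and ``I expect this to require'' mark the point where no argument is actually given. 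The trace-zero hypothesis alone does not supply the missing certificate.

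The paper closes this gap with a different mechanism that you should compare against. Rather than starting from Mordell--Lang and the set $X\cap\Gamma$, it builds $Y$ out of R\"ossler's exceptional schemes $\textnormal{Exc}^{k}(A,X)$: the image in $X$ of $\textnormal{Crit}^{k}(A,X)=[p^k]_{*}J^{k}(A/K)\cap J^{k}(X/K)$, where $J^{k}$ is the $k$-th jet scheme. The crucial point (\cref{th:ExcDist}) is that a point $P\in p^{k}\Gamma$ with $d_v(X,P)\le 1/p^{n}$ lifts canonically to a jet $\widetilde P=\lambda^{A}_{k}(P)$ lying in $[p^k]_{*}J^{k}(A/K)$ and still within $1/p^{n}$ of $J^{k}(X/K)$; this converts \emph{approximate} proximity to $X$ into approximate proximity to $\textnormal{Exc}^{k}(A,X)$, which is exactly the algebraic certificate you were missing. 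Uniformity over $\Gamma$ is obtained not by bad-reduction analysis but by translating $X$ by coset representatives of the finite group $\Gamma/p^{m}\Gamma$, and the descent to a \emph{linear} $Y$ follows because, by \cref{th:ExcSub} (resting on R\"ossler's theorem, which is where the trace-zero hypothesis and Mordell--Lang-type input actually enter), the exceptional scheme is a proper closed subscheme whenever $X$ is not linear, so iterating and invoking noetherianity terminates at a linear $Y$ defined over $K$. If you want to salvage your outline, the jet-scheme construction is precisely the rigorous form of your ``formal neighbourhood / tangent direction'' idea, but it must be built into the definition of $Y$ itself rather than applied afterwards to a $Y$ chosen from Mordell--Lang.
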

Note that what Hrushovski originally proved in Theorem 6.4. of \cite{Hru} is slightly weaker than the result we prove here.
\begin{remark}
\label{rem:HrushDist}
In terms of the $v$-adic distance, the above inequality translates into \begin{align*}
d_{v}(Y,P)\leq e^{C_{v}}\cdot d_{v}(X,P)
\end{align*}
Of course, the $v$-adic distance is only defined up to a multiplicative constant, and so the constant in Hrushovski's theorem is unavoidable.
\end{remark}
\begin{remark}
\label{rem:TrZero}
If we remove the condition on the $K/k$-trace being zero we still can define a subvariety $Y\subset X$ satisfying the same inequality, however $Y$ may no longer be linear. Instead, $Y$ will be a finite union of ``special" subvarieties, see Theorem 1.~and the subsequent discussion in \cite{Hru}.
\end{remark}
\begin{remark}
\label{rem:Field}  
In Hrushovski's original proof $Y$ is defined only over some extension field of $K$. Poonen and Voloch showed in \cite{Poo-Vol} that one can always find such a $Y$ defined over $K$ by using an argument from model theory. Our method then reproves this fact, but without the model theoretic input.
\end{remark}
\begin{corollary}
\label{cor:HrushPnt}
We have an equality of sets
\begin{align*}
X(K)\cap\Gamma=Y(K)\cap\Gamma
\end{align*}
\end{corollary}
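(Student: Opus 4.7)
The plan is extremely short, since essentially all the work is hidden inside Theorem~\ref{th:Hrush}. The containment $Y(K) \cap \Gamma \subseteq X(K) \cap \Gamma$ is immediate from $Y \subset X$, so I only need to handle the reverse inclusion.

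For this, I would fix an arbitrary discrete valuation $v$ of $K$ (any single choice suffices) and consider a point $P \in X(K) \cap \Gamma$. The first key observation is that since $P$ is a $K$-rational point actually lying on $X$, every local equation cutting out $X$ near $P$ vanishes at $P$, so $d_v(X, P) = 0$ and consequently $\lambda_v(X, P) = +\infty$. Feeding this into the inequality of Theorem~\ref{th:Hrush}, the right-hand side $\lambda_v(Y, P) + C_v$ must also be infinite; since $C_v$ is a finite constant, this forces $\lambda_v(Y, P) = +\infty$, i.e.\ $d_v(Y, P) = 0$. Running the previous observation in reverse, this says that the local equations for $Y$ at $P$ all vanish at $P$, so $P \in Y$. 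Because $P$ is $K$-rational and $Y$ is defined over $K$, we conclude $P \in Y(K) \cap \Gamma$.

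The argument presents no real obstacle once Theorem~\ref{th:Hrush} is in hand. The only delicate point is the tautological equivalence $d_v(Z, P) = 0 \iff P \in Z$ for a $K$-rational point $P$ and a $K$-defined closed subvariety $Z$; this follows directly from any standard definition of the $v$-adic distance in terms of local defining equations (if $P \notin Z$ then some local equation $f$ for $Z$ satisfies $f(P) \in K^{\times}$, so $|f(P)|_v$ is a finite positive real number), and requires no Néron model or reduction-theoretic input.
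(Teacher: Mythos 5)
Your argument is correct and is exactly the intended deduction: the paper itself leaves \cref{cor:HrushPnt} without a written proof, and the standard route is precisely yours --- for $P\in X(K)\cap\Gamma$ one has $d_{v}(X,P)=0$, so the inequality of \cref{th:Hrush} forces $d_{v}(Y,P)=0$ and hence $P\in Y(K)$, the reverse inclusion being trivial from $Y\subset X$. Your closing remark on the equivalence $d_{v}(Z,P)=0\iff P\in Z$ for $K$-rational $P$ is the only point needing care, and you justify it adequately.
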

\begin{remark}
The corollary above implies the Mordell-Lang conjecture in the sense that if $X(K)\cap\Gamma$ is dense in $X$, then so must $Y(K)\cap\Gamma$ be, and hence $X=Y$ is linear. Thus demonstrating that Hrushovski's theorem can be thought of as a continuous version of Mordell-Lang.
\end{remark}
We can and shall assume throughout that $X$ is irreducible. Our approach to proving \cref{th:Hrush} is to consider the closed subschemes
\begin{align*}
\text{Exc}^{n}(A,X^{+Q})\subseteq X^{+Q}
\end{align*}
which we refer to generally as exceptional schemes; we will define them precisely in the next chapter. Here $n\in\mathbb{N}$ and $X^{+Q}$ denotes the translation of $X$ by a point $Q\in\Gamma$.\\

We have the following two theorems that will allow us to derive \cref{th:Hrush}.
\begin{theorem}
\label{th:ExcSub}
Assume that $X$ is not linear. Then there exists $m\in\mathbb{N}$ such that for all $Q\in\Gamma$ we have a strict inclusion
\begin{align*}
\textnormal{Exc}^{m}(A,X^{+Q})\subsetneq X^{+Q}
\end{align*}
\end{theorem}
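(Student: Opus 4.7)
The plan is to argue by contradiction: suppose that for every $m \in \mathbb{N}$ there exists $Q_m \in \Gamma$ with $\text{Exc}^m(A, X^{+Q_m}) = X^{+Q_m}$. I will extract from this data a translate of $X$ that is forced to be linear, contradicting the hypothesis on $X$.

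The first step is to understand how $\text{Exc}^n(A, Y)$ behaves as $n$ grows and as $Y$ varies. Provided the construction yields a nested family $\text{Exc}^{n+1}(A, Y) \subseteq \text{Exc}^n(A, Y)$ (as one would expect from a notion designed to capture increasing amounts of exceptional data), each $Q_m$ in fact witnesses the equalities $\text{Exc}^k(A, X^{+Q_m}) = X^{+Q_m}$ for every $k \leq m$. Assuming further that the construction is relative in $Y$, letting $Q$ vary over $A$ assembles the $\text{Exc}^n(A, X^{+Q})$ into a family over $A$, so that the locus $T_n := \{Q \in A : \text{Exc}^n(A, X^{+Q}) = X^{+Q}\}$ is constructible. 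Passing to Zariski closures yields a descending chain $\overline{T_1} \supseteq \overline{T_2} \supseteq \ldots$ of closed subsets of $A$, which by Noetherianity stabilizes at some nonempty $T_\infty$.

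Pick any $Q^* \in T_\infty$; then $\text{Exc}^n(A, X^{+Q^*}) = X^{+Q^*}$ for every $n$. The crux of the argument is to show that this identity forces $X^{+Q^*}$ to be linear. I expect this to flow directly from the definition of the exceptional scheme: ``iterated exceptionality at every point'' should, via a stabilizer or deformation argument internal to the construction, produce a positive-dimensional abelian subvariety $B \subseteq A$ with $X^{+Q^*}$ stabilized by $B$, hence decomposable as a finite union of $B$-cosets. Since linearity is translation-invariant, the linearity of $X^{+Q^*}$ transfers to $X$, delivering the desired contradiction.

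The last step is where I expect the main obstacle: it is precisely the point at which the structural meaning of $\text{Exc}^n$ must be invoked, and pinning down the extraction of the abelian subvariety $B$ from the infinite exceptional chain will require the detailed construction of $\text{Exc}^n$ to be developed in the next chapter. A secondary technical point is verifying that the $T_n$ are genuinely constructible and that taking closures is harmless here -- if the $T_n$ themselves turn out to be closed (which I expect, as ``equality of two closed subschemes'' is a closed condition in a flat family), the argument simplifies and no approximation by generic translates is needed.
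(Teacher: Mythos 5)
Your proposal correctly negates the statement and sets up a plausible reduction, but the step you yourself flag as ``the main obstacle'' is in fact the entire mathematical content of the theorem, and you do not supply it. The implication ``$\textnormal{Exc}^{n}(A,X^{+Q^{*}})=X^{+Q^{*}}$ for all $n$ forces $X^{+Q^{*}}$ to be linear'' does not flow from the definition of the exceptional schemes by a stabilizer or deformation argument internal to the construction. In the paper this is exactly what is outsourced to R\"ossler: Theorem 1.2 of \cite{Ros} says that a non-linear $X$ in an abelian variety with $K/k$-trace zero has the property that $X^{+Q}_{L}\cap\textnormal{Tor}(A(L))$ is not Zariski dense in $X^{+Q}_{L}$ for every extension $L/K$ and every $Q\in A(L)$ (a Manin--Mumford--type input), and Theorem 3.1 of \cite{Ros} then converts this non-density of torsion into a single $m$ with $\textnormal{Exc}^{m}(\mathcal{A},\mathcal{X}^{+Q})\subsetneq\mathcal{X}^{+Q}$ uniformly in $Q$. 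The bridge between exceptionality and linearity runs through torsion points, not through extracting an abelian subvariety $B$ stabilizing $X^{+Q^{*}}$ directly from the jet-scheme construction; without that input your argument has no way to contradict the non-linearity of $X$.

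Two secondary points. First, your contradiction framework is an unnecessary detour: R\"ossler's Theorem 3.1 already delivers the uniform $m$ for all $Q$ at once, so no passage to a limiting $Q^{*}$ is needed; and if you do pursue it, the constructibility of $T_{n}=\{Q: \textnormal{Exc}^{n}(A,X^{+Q})=X^{+Q}\}$ is not free, because $\textnormal{Exc}^{n}$ is defined as a scheme-theoretic image and scheme-theoretic images do not commute with non-flat base change, so assembling the $\textnormal{Exc}^{n}(A,X^{+Q})$ into a well-behaved family over $A$ requires an argument (and even then a point of $\overline{T_{n}}$ need not lie in $T_{n}$, so you would have to work with generic points of the stabilized component). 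Second, the paper's proof contains a step your sketch omits entirely: the exceptional schemes are constructed over the model $U$, and one must use flatness of $\mathcal{X}^{+Q}$ over $U$ to see that the strict inclusion $\textnormal{Exc}^{m}(\mathcal{A},\mathcal{X}^{+Q})\subsetneq\mathcal{X}^{+Q}$ survives passage to the generic fibre, i.e.\ that the non-empty open complement has non-empty generic fibre. As written, your proposal is a plausible outline of a different strategy but has a genuine gap at its crux.
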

\begin{theorem}
\label{th:ExcDist}
Let $P\in p^k\Gamma$ be such that $d_{v}(X,P)\leq 1/p^n$ with $n$ sufficiently large, then
\begin{align*}
d_{v}(\textnormal{Exc}^{k}(A,X),P)\leq 1/p^n
\end{align*}
\end{theorem}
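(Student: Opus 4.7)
The natural approach is induction on $k$. For $k = 0$ the scheme $\textnormal{Exc}^{0}(A,X)$ ought to equal $X$, so the conclusion is immediate from the hypothesis $d_{v}(X,P) \leq 1/p^{n}$.

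For the inductive step, the structural fact to exploit is the factorisation $[p] = V \circ F_{A}$ of multiplication by $p$ in characteristic $p$, where $F_{A} \colon A \to A^{(p)}$ is the relative Frobenius and $V \colon A^{(p)} \to A$ the Verschiebung. Under this factorisation, Frobenius takes $v$-adic distance to its $p$-th power, while Verschiebung, being generically étale, alters distance only by a bounded multiplicative constant depending on $A$ and $v$. One expects the definition of $\textnormal{Exc}^{k}(A,X)$ given in the next chapter to be iterative along this factorisation: roughly, a point $P = pP'$ is close to $\textnormal{Exc}^{k}(A,X)$ precisely when $P'$ is close to $\textnormal{Exc}^{k-1}(A,\widetilde{X})$ for a suitable component $\widetilde{X}$ of $[p]^{-1}X$ through $P'$.

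Granting such a description, the plan is as follows. Write $P = pP'$ with $P' \in p^{k-1}\Gamma$. From $d_{v}(X,P) \leq 1/p^{n}$ and the factorisation $[p] = V \circ F_{A}$, one first deduces a bound of the shape $d_{v}(\widetilde{X},P') \leq 1/p^{m}$ where $m$ is an explicit function of $n$ tending to infinity with $n$; taking $n$ sufficiently large ensures that $m$ exceeds the analogous threshold required to invoke the inductive hypothesis at level $k-1$. Applying that hypothesis to $P' \in p^{k-1}\Gamma$ gives $d_{v}(\textnormal{Exc}^{k-1}(A,\widetilde{X}),P') \leq 1/p^{m}$, and pushing forward by $[p]$, using the compatibility of the exceptional schemes with the $[p]$-factorisation, recovers $d_{v}(\textnormal{Exc}^{k}(A,X),P) \leq 1/p^{n}$.

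The main obstacle is the precise bookkeeping of the constants produced by the inseparable part of $[p]$ and by the passage from $\widetilde{X}$ to $\textnormal{Exc}^{k-1}(A,\widetilde{X})$: each such constant depends on $A$, $X$, and $v$ but not on $k$ or $P$, and one must check that they accumulate in a controlled way so as to be absorbed by a single hypothesis of the form \emph{$n$ sufficiently large}. The secondary difficulty --- which can only be addressed once $\textnormal{Exc}^{k}$ is formally defined --- is verifying that the definition is genuinely adapted to the Frobenius--Verschiebung factorisation, so that the schematic push/pull argument above is justified at the level of schemes rather than merely of underlying sets.
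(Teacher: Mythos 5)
Your proposal is not a proof but a conditional sketch, and the condition it rests on fails: you assume that $\textnormal{Exc}^{k}(A,X)$ is defined recursively along the factorisation $[p]=V\circ F_{A}$, so that closeness of $P=pP'$ to $\textnormal{Exc}^{k}(A,X)$ is governed by closeness of $P'$ to $\textnormal{Exc}^{k-1}(A,\widetilde{X})$ for a component $\widetilde{X}$ of $[p]^{-1}X$. That is not how the exceptional schemes are defined. Following R\"ossler, one sets $\textnormal{Crit}^{k}(A,X)=[p^{k}]_{*}(J^{k}(A/K))\cap J^{k}(X/K)$ inside the $k$-th jet scheme, where $[p^{k}]$ acts on $J^{k}(A/K)$ as a group scheme, and $\textnormal{Exc}^{k}(A,X)$ is the scheme-theoretic image of $\textnormal{Crit}^{k}(A,X)$ in $X$. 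There is no recursion over components of $[p]^{-1}X$, and the Frobenius--Verschiebung factorisation plays no role; so the inductive step has nothing to induct on. The missing idea is the jet lift: since $P\in p^{k}\Gamma$ and $\lambda^{A}_{k}$ is compatible with the group structure, the jet $\widetilde{P}=\lambda^{A}_{k}(P)$ automatically lies in $[p^{k}]_{*}J^{k}(A/K)(K)$ (distance $0$ to that closed subscheme), while the hypothesis $d_{v}(X,P)\leq 1/p^{n}$, i.e.\ $P\in\mathcal{X}(u_{n})$, is preserved by applying the jet functor, giving $\widetilde{P}\in J^{k}(\mathcal{X}/U)(u_{n})$ and hence $d_{v}(J^{k}(X/K),\widetilde{P})\leq 1/p^{n}$. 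These two facts together put $\widetilde{P}$ within $1/p^{n}$ of $\textnormal{Crit}^{k}(A,X)$, and pushing down to $X$ gives the claim. The argument is direct, with no induction on $k$ and no separable/inseparable bookkeeping.

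A secondary defect: even granting your recursive picture, each application of Verschiebung costs a multiplicative constant, and after $k$ steps you would obtain a bound of the shape $C^{k}/p^{n}$ rather than $1/p^{n}$. The theorem asserts the \emph{same} exponent $n$ on both sides, and this is what the deduction of \cref{th:Hrush} propagates through the chain $Y_{k}\subseteq\cdots\subseteq Y_{1}\subseteq X$; a degraded constant could be absorbed into $C_{v}$ at the very end, but your argument as written does not prove the statement in the form given. The ``$n$ sufficiently large'' hypothesis in the actual proof is not there to absorb such constants; it is needed only to ensure that closeness to two closed subschemes forces closeness to their scheme-theoretic intersection.
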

We shall prove each of these theorems in the subsequent chapters. Let us now show how to prove Hrushovski's theorem using these results.
\begin{proof}[Proof. (of \cref{th:Hrush})]
If $X$ is linear, then we are done. So assume otherwise, then by \cref{th:ExcSub} we can choose $m\in\mathbb{N}$ such that
\begin{align*}
\text{Exc}^{m}(A,X^{+Q})\subsetneq X^{+Q}
\end{align*}
for all $Q\in\Gamma$. In particular, since $\Gamma$ is finitely generated, we can take $\lbrace Q_{i}\rbrace$ to be a set of representatives of the equivalence classes of the finite group $\Gamma/p^m\Gamma$, and define
\begin{align*}
Y_{1}:=\cup_{i}\text{Exc}^{m}(A,X^{+Q_{i}})^{-Q_{i}}\subsetneq X
\end{align*}
Although $Y_{1}$ may still not be linear, we can repeat this construction with each irreducible component of $Y_{1}$ in place of $X$ to get $Y_{2}\subseteq Y_{1}$. Inductively, this gives a descending chain of closed subschemes
\begin{align*}
\cdots\subseteq Y_{k}\subseteq\cdots\subseteq Y_{2}\subseteq Y_{1}\subseteq X
\end{align*}
which by noetherianity must stabilize at some $Y_{N+1}=Y_{N}$. By \cref{th:ExcSub} $Y:=Y_{N}$ then has to be linear.\\

Now, if $P\in\Gamma$ is such that $d_{v}(X,P)\leq 1/p^n$, then we can choose $Q_{i}$ such that $P+Q_{i}\in p^m\Gamma$, and we will have
\begin{align*}
d_{v}(X^{+Q_{i}},P+Q_{i})\leq 1/p^n
\end{align*}
therefore by \cref{th:ExcDist} we get that
\begin{align*}
d_{v}(\text{Exc}^{m}(A,X^{+Q_{i}}),P+Q_{i})\leq 1/p^n &\implies d_{v}(\text{Exc}^{m}(A,X^{+Q_{i}})^{-Q_{i}},P)\leq 1/p^n\\
&\implies d_{v}(Y_{1},P)\leq 1/p^n
\end{align*}
Repeating this argument inductively to get that $d_{v}(Y_{k},P)\leq 1/p^n$ for all $k$, which of course implies $d_{v}(Y,P)\leq 1/p^n$, this completes the proof.

\end{proof}
\section{The Exceptional Schemes}
Take the N\'eron model $\mathcal{A}$ of $A$ over $U$, and assume $U$ is sufficiently small so that the closed immersion $X\hookrightarrow A$ extends to a closed immersion $\mathcal{X}\hookrightarrow\mathcal{A}$, with $\mathcal{X}$ flat over $U$. We do not however consider $U$ to be fixed, as we will need to vary it throughout the paper.\\

Note that for the N\'eron model there is a bijective correspondence between $A(K)$ and $\mathcal{A}(U)$, therefore we shall seldom distinguish between a point $Q\in A(K)$ and its lifting $Q\in\mathcal{A}(U)$.\\

In \cite{Ros} R\"ossler defines the exceptional schemes $\text{Exc}^{n}(\mathcal{A},\mathcal{X})$ over $U$ for the purposes of proving the Mordell-Lang conjecture. We shall go over the construction and define the analogous objects $\text{Exc}^{n}(A,X)$ over $K$, as-well as of course proving they are compatible, i.e.
\begin{align*}
\text{Exc}^{n}(\mathcal{A},\mathcal{X})_{K}=\text{Exc}^{n}(A,X)
\end{align*}
\subsection{Defining the Exceptional Schemes}
Let us first introduce the notion of the Weil restriction functor. Start with a scheme $T$ and a morphism $T'\rightarrow T$. Then for each $T$-scheme $Z$ we can consider the functor
\begin{align*}
W/T\mapsto \text{Hom}_{T'}(W\times_{T}T',Z)
\end{align*}
If $T'$ is finite, flat and locally of finite presentation over $T$ then by 7.6. in \cite{Bos} this functor is representable by a $T$-scheme which we denote $\mathfrak{R}_{T'/T}(Z)$.\\

Now, consider the diagonal immersion $\Delta:U\rightarrow U\times_{k} U$. Let $\mathcal{I}_{\Delta}\subseteq\mathcal{O}_{U\times U}$ be the ideal sheaf of $\Delta_{*}U$. For each $n\geq 0$ define $U_{n}$, the $n$-th infinitesimal neighbourhood of the diagonal inside $U\times_{k} U$, as the closed subscheme associated to $\mathcal{O}_{U\times U}/\mathcal{I}_{\Delta}^{n+1}$.\\

From the two projection maps $\pi_{1},\pi_{2}:U\times_{k} U\rightarrow U$, we obtain the induced maps $\pi^{U_{n}}_{1},\pi^{U_{n}}_{2}:U_{n}\rightarrow U$. We view $U_{n}$ as a $U$-scheme via $\pi^{U_{n}}_{1}$.
\begin{lemma}
\label{lem:Fin-Flat}
$U_{n}$ is finite and flat as a $U$-scheme.
\end{lemma}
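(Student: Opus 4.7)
The plan is to exploit the fact that $U$ is smooth over $k$ of relative dimension $1$, so that $\Delta : U \to U\times_k U$ is a regular closed immersion of codimension $1$, with conormal sheaf $\mathcal{I}_\Delta/\mathcal{I}_\Delta^2 \cong \Omega^1_{U/k}$ locally free of rank $1$. The standard identification for regular immersions
\begin{equation*}
\mathcal{I}_\Delta^k/\mathcal{I}_\Delta^{k+1} \;\cong\; \operatorname{Sym}^k\!\bigl(\mathcal{I}_\Delta/\mathcal{I}_\Delta^2\bigr) \;\cong\; \operatorname{Sym}^k \Omega^1_{U/k}
\end{equation*}
then shows that each graded piece $\mathcal{I}_\Delta^k/\mathcal{I}_\Delta^{k+1}$ is a sheaf supported on the diagonal which, viewed as a sheaf on $U$ via the isomorphism $\pi_1|_{\Delta(U)}: \Delta(U) \xrightarrow{\sim} U$, is locally free of rank $1$.

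Next I would use the short exact sequences
\begin{equation*}
0 \to \mathcal{I}_\Delta^k/\mathcal{I}_\Delta^{k+1} \to \mathcal{O}_{U\times U}/\mathcal{I}_\Delta^{k+1} \to \mathcal{O}_{U\times U}/\mathcal{I}_\Delta^k \to 0
\end{equation*}
for $k = 1, \dots, n$. Since all sheaves involved are supported on the diagonal and $\pi_1$ restricted to $\Delta(U)$ is an isomorphism, pushforward along $\pi_1$ preserves exactness, so $(\pi_1^{U_n})_*\mathcal{O}_{U_n}$ inherits a filtration whose successive quotients are $\operatorname{Sym}^k \Omega^1_{U/k}$ for $k = 0, 1, \dots, n$, each locally free of rank $1$. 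Iteratively, $(\pi_1^{U_n})_*\mathcal{O}_{U_n}$ is then itself locally free of rank $n+1$ as an $\mathcal{O}_U$-module, giving flatness of $\pi_1^{U_n}$.

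To promote this to finiteness I also need affineness of $\pi_1^{U_n}$. The scheme $U_n$ has the same underlying topological space as $\Delta(U)$, so for any affine open $V\subset U$ the preimage $(\pi_1^{U_n})^{-1}(V)$ is a nilpotent thickening of $\Delta(V) \cong V$. Since nilpotent thickenings of affine schemes are themselves affine (for instance by Serre's cohomological criterion, using the filtration above to reduce to vanishing of higher cohomology on $V$), this preimage is affine, and combined with the locally free rank-$(n+1)$ pushforward this yields that $\pi_1^{U_n}$ is both finite and flat.

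The main conceptual input is the regular-immersion identification of the associated graded with $\operatorname{Sym}^\bullet \Omega^1_{U/k}$; the rest is bookkeeping. A more hands-on alternative, if one wished to avoid that machinery, would be to work Zariski-locally on $U$: choose an \'etale coordinate $t$ on an affine open $V = \operatorname{Spec}(A)$, so that $\tau := 1\otimes t - t\otimes 1$ generates $\mathcal{I}_\Delta$ on $V\times V$ and is a non-zero-divisor, and then a Taylor-expansion argument in $\tau$ shows directly that $(A\otimes_k A)/(\tau^{n+1})$ is free over $A$ with basis $1, \tau, \tau^2, \dots, \tau^n$.
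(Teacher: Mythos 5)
Your argument is correct, but note that the paper does not actually prove this lemma: it simply cites Lemma 2.1 of R\"ossler's paper, so any honest argument here is "different from the paper's." What you give is a complete, self-contained proof. The structural route — $\Delta$ is a regular immersion of codimension $1$ with conormal sheaf $\Omega^1_{U/k}$, the graded pieces $\mathcal{I}_\Delta^k/\mathcal{I}_\Delta^{k+1}\cong\operatorname{Sym}^k\Omega^1_{U/k}$ are invertible, pushforward along $\pi_1$ is exact on sheaves supported on the diagonal, and an iterated extension of locally free sheaves is locally free — is sound, and you correctly do not forget the affineness of $\pi_1^{U_n}$ needed to upgrade "coherent pushforward" to "finite morphism." A bonus of this approach is that it works verbatim for $U$ smooth of arbitrary relative dimension $d$ (the rank becomes $\sum_{k=0}^{n}\binom{k+d-1}{d-1}$ rather than $n+1$), whereas the coordinate computation is most transparent for curves. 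Your "hands-on alternative" is essentially the computation in the cited reference; the only imprecision there is the claim that $\tau=1\otimes t-t\otimes 1$ generates $\mathcal{I}_\Delta$ on all of $V\times V$ — in general it only generates it on an open neighbourhood of the diagonal (étale coordinates can fail to separate distinct points with the same $t$-value) — but since $U_n$ is supported on the diagonal this is harmless and worth a clarifying sentence rather than a repair.
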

\begin{proof}
See Lemma 2.1. in \cite{Ros}.

\end{proof}
This allows us to make the following definition for a scheme $W$ over $U$.
\begin{definition}
\label{def:JetSch}
The $n$-th jet scheme of $W$ over $U$ is defined as
\begin{align*}
J^{n}(W/U):=\mathfrak{R}_{U_{n}/U}(\pi^{U_{n},*}_{2}W)
\end{align*}
\end{definition}
For each $m\leq n$ there are morphisms $U_{m}\rightarrow U_{n}$, and these subsequently induce morphisms $\Lambda^{W}_{n,m}:J^{n}(W/U)\rightarrow J^{m}(W/U)$ of the jet schemes. Furthermore, there is a map of sets
\begin{align*}
\lambda^{W}_{n}:W(U)\rightarrow J^{n}(W/U)(U)
\end{align*}
which sends $f:U\rightarrow W$ to $J^{n}(f):U=J^{n}(U/U)\rightarrow J^{n}(W/U)$.
\begin{lemma}
\label{lem:lambda}
We have the following identity for all $m\leq n$
\begin{align*}
\Lambda^{W}_{n,m}\circ\lambda^{W}_{n}=\lambda^{W}_{m}
\end{align*}
\end{lemma}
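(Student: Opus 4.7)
The plan is to prove the identity by testing it on functor-of-points data and unwinding the definition of Weil restriction so that both sides become the same honest morphism of schemes over $U$.

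First I would identify $U$-points of the jet scheme. By definition and the universal property of Weil restriction,
\begin{align*}
J^{n}(W/U)(U)=\mathrm{Hom}_{U_{n}}\bigl(U\times_{U}U_{n},\,\pi_{2}^{U_{n},*}W\bigr)=\mathrm{Hom}_{U_{n}}\bigl(U_{n},\,W\times_{U}U_{n}\bigr),
\end{align*}
where on the left $U_{n}$ has $U$-structure via $\pi_{1}^{U_{n}}$ and on the right via $\pi_{2}^{U_{n}}$. Sections of the second projection are in natural bijection with $\mathrm{Hom}_{U}(U_{n},W)$, where now $U_{n}$ is regarded as a $U$-scheme via $\pi_{2}^{U_{n}}$. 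Under this identification, an unwinding of \cref{def:JetSch} shows that $\lambda^{W}_{n}(f)$ corresponds exactly to the composite $f\circ\pi_{2}^{U_{n}}\colon U_{n}\to U\to W$, while $\Lambda^{W}_{n,m}$ corresponds, also by the functoriality of $\mathfrak{R}_{(-)/U}$, to precomposition with the closed immersion $\iota_{m,n}\colon U_{m}\hookrightarrow U_{n}$ coming from the quotient $\mathcal{O}_{U\times U}/\mathcal{I}_{\Delta}^{n+1}\twoheadrightarrow \mathcal{O}_{U\times U}/\mathcal{I}_{\Delta}^{m+1}$.

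With these translations in hand, both sides of the claimed identity become elements of $\mathrm{Hom}_{U}(U_{m},W)$, namely
\begin{align*}
\Lambda^{W}_{n,m}\circ\lambda^{W}_{n}(f)\ \longleftrightarrow\ f\circ\pi_{2}^{U_{n}}\circ\iota_{m,n},\qquad \lambda^{W}_{m}(f)\ \longleftrightarrow\ f\circ\pi_{2}^{U_{m}}.
\end{align*}
So everything reduces to verifying $\pi_{2}^{U_{n}}\circ\iota_{m,n}=\pi_{2}^{U_{m}}$. This is just the statement that the inclusion of infinitesimal neighbourhoods is a morphism of $U\times_{k}U$-schemes, since both $U_{m}$ and $U_{n}$ are subschemes of $U\times_{k}U$ and $\iota_{m,n}$ is the tautological closed immersion between them; projecting on the second factor commutes with this inclusion on the nose.

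The only genuinely non-formal point is checking that $\Lambda^{W}_{n,m}$, as induced by $\iota_{m,n}$ through $\mathfrak{R}_{(-)/U}$, acts on $U$-points by precomposition with $\iota_{m,n}$; this is a standard but slightly fiddly consequence of how functoriality of Weil restriction in the base scheme is constructed (one has to juggle the two different ways in which $U_{m}$ sits over $U$). Once that naturality is in hand the rest of the argument is formal, so I expect that compatibility check to be the main obstacle — everything else is a direct unravelling of definitions.
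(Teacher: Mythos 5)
Your proof is correct, but it takes a more computational route than the paper. The paper's own argument is a two-line appeal to functoriality: since $\Lambda_{n,m}$ is a natural transformation $J^{n}\Rightarrow J^{m}$, the square with vertical arrows $J^{n}(f)$ and $J^{m}(f)$ and horizontal arrows $\Lambda^{U}_{n,m}$ and $\Lambda^{W}_{n,m}$ commutes, and because $J^{n}(U/U)=U$ is terminal the top arrow $\Lambda^{U}_{n,m}$ is $\mathrm{id}_{U}$, whence $\Lambda^{W}_{n,m}\circ J^{n}(f)=J^{m}(f)$. You instead unwind the Weil restriction to identify $J^{n}(W/U)(U)$ with $\mathrm{Hom}_{U}(U_{n},W)$ (with $U_{n}$ over $U$ via $\pi_{2}^{U_{n}}$), under which $\lambda^{W}_{n}(f)$ becomes $f\circ\pi_{2}^{U_{n}}$ and $\Lambda^{W}_{n,m}$ becomes precomposition with $\iota_{m,n}$, reducing everything to $\pi_{2}^{U_{n}}\circ\iota_{m,n}=\pi_{2}^{U_{m}}$. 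The two arguments encode the same content — the identity of $U\times_{k}U$-scheme structures on the infinitesimal neighbourhoods is exactly what makes $\Lambda_{n,m}$ natural in the paper's version — but yours makes explicit what the paper leaves implicit (in particular the check that $(\pi_{2}^{U_{n},*}W)|_{U_{m}}=\pi_{2}^{U_{m},*}W$, which you correctly flag as the one non-formal step), at the cost of being longer; the paper's version is shorter but quietly relies on the reader accepting that the $\Lambda_{n,m}$ are natural transformations and that $J^{n}(U/U)=U$ with $\Lambda^{U}_{n,m}=\mathrm{id}_{U}$.
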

\begin{proof}
This follows from the commutative diagram for any $f:U\rightarrow W$
\begin{center}
\begin{tikzcd}
J^{n}(U/U) \arrow[r, "\Lambda^{U}_{n,m}=\text{id}_{U}"] \arrow[d, "J^{n}(f)"']
& J^{m}(U/U) \arrow[d, "J^{m}(f)"]\\
J^{n}(W/U) \arrow[r, "\Lambda^{W}_{n,m}"']
& J^{m}(W/U)
\end{tikzcd}
\end{center}

\end{proof}
\begin{lemma}
\label{lem:lam-func}
For any $U$-morphism $g:W\rightarrow W'$ we have
\begin{align*}
J^{n}(g)\circ\lambda^{W}_{n}=\lambda^{W'}_{n}\circ g
\end{align*}
\end{lemma}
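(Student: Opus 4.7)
The plan is to derive this identity from the functoriality of the jet scheme construction $W\mapsto J^{n}(W/U)$. First I would verify that $J^{n}(-/U)$ is a functor from $U$-schemes to $U$-schemes: this follows because $J^{n}(-/U)$ is the composition of the pullback functor $\pi_{2}^{U_{n},*}$ and the Weil restriction functor $\mathfrak{R}_{U_{n}/U}$, both of which are clearly functorial in their input (Weil restriction is functorial via postcomposition on the representing functor, and pullback is functorial by construction). Consequently, for any composable $U$-morphisms $h_{1}:W\to W'$ and $h_{2}:W'\to W''$, we have $J^{n}(h_{2}\circ h_{1})=J^{n}(h_{2})\circ J^{n}(h_{1})$, and $J^{n}(\textnormal{id}_{W})=\textnormal{id}_{J^{n}(W/U)}$.

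With functoriality in hand, the rest is a matter of tracing through the definition of $\lambda_{n}$. Take any $f\in W(U)$, thought of as a morphism $f:U\to W$. By the definition given in the excerpt, $\lambda_{n}^{W}(f)=J^{n}(f)$, where we use the canonical identification $J^{n}(U/U)=U$. Applying $J^{n}(g)$ and using functoriality yields
\begin{align*}
\bigl(J^{n}(g)\circ\lambda_{n}^{W}\bigr)(f)=J^{n}(g)\circ J^{n}(f)=J^{n}(g\circ f)=\lambda_{n}^{W'}(g\circ f)=\bigl(\lambda_{n}^{W'}\circ g\bigr)(f).
\end{align*}
Since $f\in W(U)$ was arbitrary, this gives the claimed equality of maps.

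As with \cref{lem:lambda}, the argument is essentially a diagram chase and presents no real obstacle. The only minor bookkeeping is to confirm the canonical identification $J^{n}(U/U)=U$ and that $J^{n}$ sends identity morphisms to identity morphisms; both are immediate from the description of $J^{n}$ as the Weil restriction of a pullback.
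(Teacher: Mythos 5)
Your proposal is correct and matches the paper's proof, which likewise deduces the identity directly from the functoriality of $J^{n}$ applied to $J^{n}(g\circ f)=J^{n}(g)\circ J^{n}(f)$ for $f:U\to W$. The extra details you supply (functoriality of Weil restriction and pullback, the identification $J^{n}(U/U)=U$) are just a fuller spelling-out of the same argument.
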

\begin{proof}
This simply follows from the fact that $J^{n}$ is a functor and so
\begin{align*}
J^{n}(g\circ f)=J^{n}(g)\circ J^{n}(f)
\end{align*}
for any $f:U\rightarrow W$.

\end{proof}
Next, let $K\rightarrow U$ be the generic point of $U$. We can consider the analogous diagonal immersion $\Delta_{K}:K\rightarrow K\times_{k}K$, and the associated ideal $I_{\Delta}\subseteq\mathcal{O}_{K\times K}$, in order to define $K_{n}:=\mathcal{O}_{K\times K}/I^{n+1}_{\Delta}$.\\

As before, we also have maps $\pi^{K_{n}}_{1},\pi^{K_{n}}_{2}:K_{n}\rightarrow K$, and we view $K_{n}$ as a $K$-scheme via $\pi^{K_{n}}_{1}$.
\begin{lemma}
\label{lem:BaseChg}
We have that
\begin{align*}
K_{n}=U_{n}\times_{U}K
\end{align*}
\end{lemma}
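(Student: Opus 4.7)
The plan is to work on an affine chart $U = \mathrm{Spec}(B)$, which reduces the claim to the commutative-algebra identity
\[
(K \otimes_k B)/\bar{I}^{n+1} \cong (K \otimes_k K)/I_{\Delta}^{n+1},
\]
where $\bar{I}$ and $I_{\Delta}$ denote the kernels of the multiplication maps $K \otimes_k B \to K$ and $K \otimes_k K \to K$ respectively. The identification of the left-hand side with the coordinate ring of $U_n \times_U K$ is routine: base changing $\pi_1$ along $\mathrm{Spec}(K) \to U$ turns $U \times_k U$ into $\mathrm{Spec}(K \otimes_k B)$, under which $\mathcal{I}_\Delta$ pulls back to $\bar{I}$, and $(I \cdot S)^{n+1} = I^{n+1} \cdot S$ for any ring extension $R \to S$ and ideal $I \subseteq R$.

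Next, I would observe that the localization $B \to K$ induces $K \otimes_k B \to K \otimes_k K$ sending $\bar{I}$ into $I_\Delta$, and so descends to a natural map
\[
\varphi : (K \otimes_k B)/\bar{I}^{n+1} \longrightarrow (K \otimes_k K)/I_{\Delta}^{n+1}.
\]
To produce an inverse, the key observation is that every $b \in B$ with nonzero image $\tilde b \in K$ has $1 \otimes b$ a unit modulo $\bar{I}^{n+1}$. Indeed,
\[
1 \otimes b = \tilde b \otimes 1 + (1 \otimes b - \tilde b \otimes 1),
\]
where the second summand lies in $\bar{I}$ and is therefore nilpotent modulo $\bar{I}^{n+1}$, while $\tilde b \otimes 1$ is a unit since $\tilde b \in K^\times$; a unit plus a nilpotent is a unit. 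By the universal property of localization, the composite $K \otimes_k B \to (K \otimes_k B)/\bar{I}^{n+1}$ extends uniquely through $K \otimes_k B \to K \otimes_k K$, and kills $I_\Delta^{n+1}$, yielding an inverse $\psi : (K \otimes_k K)/I_\Delta^{n+1} \to (K \otimes_k B)/\bar{I}^{n+1}$. That $\varphi$ and $\psi$ are mutually inverse is then immediate on generators.

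I expect the only real subtlety to be keeping the two factor structures straight: the base change uses the $B$-action from the \emph{first} factor (from $\pi_1$), which replaces that copy of $B$ with $K$, leaving $K \otimes_k B$; the subsequent localization of the remaining $B$ happens in the \emph{second} factor, and it is precisely this asymmetry that makes the nilpotence argument above the crux of the proof.
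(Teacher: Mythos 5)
Your proof is correct and follows essentially the same route as the paper's: both reduce to an affine computation and both rest on the same crux, namely that elements of the diagonal ideal are nilpotent modulo its $(n+1)$-st power, so that ``unit plus nilpotent'' makes the relevant denominators invertible. The only difference is packaging --- you build an explicit inverse via the universal property of localization, whereas the paper exhibits an injection and uses the same nilpotence trick to prove it surjective.
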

\begin{proof}
We can assume wlog that $U=\text{Spec}(R)$, in which case we need to show that
\begin{align*}
(R\otimes_{k}R/\mathcal{I}^{n+1}_{\Delta})\otimes_{R}K=K\otimes_{k}K/I^{n+1}_{\Delta}
\end{align*}
where
\begin{align*}
\mathcal{I}_{\Delta}=\langle r\otimes 1-1\otimes r:r\in R\rangle\\
I_{\Delta}=\langle x\otimes 1-1\otimes x:x\in K\rangle
\end{align*}
First of all, we have a natural inclusion
\begin{align*}
\phi:(R\otimes_{k}R/\mathcal{I}^{n+1}_{\Delta})\otimes_{R}K&\hookrightarrow K\otimes_{k}K/I^{n+1}_{\Delta}\\
r\otimes r'\otimes x&\mapsto rx\otimes r'
\end{align*}
where we have used the fact that $I^{n+1}_{\Delta}\cap R\otimes_{k}R=\mathcal{I}^{n+1}_{\Delta}$.\\

To get surjectivity, take any $r\in R^{\times}$ and note that
\begin{align*}
(r\otimes 1\otimes 1)\cdot(1\otimes 1\otimes r^{-1})=r\otimes 1\otimes r^{-1}=1\otimes 1\otimes rr^{-1}=1
\end{align*}
and thus $r\otimes 1\otimes 1$ is invertible. Furthermore, since $r\otimes 1-1\otimes r\in \mathcal{I}_{\Delta}$, the element $r\otimes 1\otimes 1-1\otimes r\otimes 1$ is nilpotent.\\

Since the sum of an invertible element and a nilpotent one is also invertible:
\begin{align*}
\frac{1}{i+n}=\frac{i^{-1}}{1+i^{-1}n}=i^{-1}(1-i^{-1}n+\cdots+(-i^{-1}n)^k)
\end{align*}
it follows that $1\otimes r\otimes 1=r\otimes 1\otimes 1-(r\otimes 1\otimes 1-1\otimes r\otimes 1)$ is also invertible.\\

The image under $\phi$ of these inverses must be $r^{-1}\otimes 1$ and $1\otimes r^{-1}$ respectively. Hence, $\phi$ is surjective and so an isomorphism.

\end{proof}
By the above lemma, $K_{n}$ is also finite and flat as a $K$-scheme, so we can also define the jet schemes for a scheme $W$ now over $K$.
\begin{definition}
\label{def:GnrJetSch}
The $n$-th jet scheme of $W$ over $K$ is
\begin{align*}
J^{n}(W/K):=\mathfrak{R}_{K_{n}/K}(\pi^{K_{n},*}_{2}W)
\end{align*}
\end{definition}
Let us check that these two definitions are compatible.
\begin{lemma}
\label{lem:JetBaseChg}
We have that
\begin{align*}
J^{n}(W/U)_{K}=J^{n}(W_{K}/K)
\end{align*}
\begin{proof}
Since the Weil restriction functor is compatible with base change (see again 7.6. in \cite{Bos}), we have that
\begin{align*}
J^{n}(W/U)_{K}&=\mathfrak{R}_{U_{n}/U}(\pi^{U_{n},*}_{2}W)_{K}\\
&=\mathfrak{R}_{K_{n}/K}((\pi^{U_{n},*}_{2}W)_{K_{n}})\\
&=\mathfrak{R}_{K_{n}/K}(\pi^{K_{n},*}_{2}W_{K})=J^{n}(W_{K}/K)
\end{align*}
where we have used that $U_{n}\times_{U}K=K_{n}$, as-well as the following Cartesian diagram
\begin{center}
\begin{tikzcd}[row sep=5, column sep=5]
& \pi^{K_{n},*}_{2}W_{K} \arrow[dl] \arrow[rr] \arrow[dd] & & W_{K}\arrow[dl] \arrow[dd] \\
\pi^{U_{n},*}_{2}W \arrow[rr, crossing over] \arrow[dd] & & W \\
& K_{n} \arrow[dl] \arrow[rr] & & K \arrow[dl] \\
U_{n} \arrow[rr] & & U \arrow[from=uu, crossing over]\\
\end{tikzcd}
\end{center}

\end{proof}
\end{lemma}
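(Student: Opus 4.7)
The plan is to reduce the claim to two inputs: the compatibility of Weil restriction with base change (7.6 in \cite{Bos}), and the identification $K_n = U_n\times_U K$ from \cref{lem:BaseChg}. Since the jet scheme is defined as a Weil restriction, these should combine essentially formally.

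First, I would unfold definitions to write $J^n(W/U)_K = \mathfrak{R}_{U_n/U}(\pi^{U_n,*}_2 W)\times_U K$, and then apply the base-change property of Weil restriction to rewrite it as $\mathfrak{R}_{U_n\times_U K/K}\bigl((\pi^{U_n,*}_2 W)\times_{U_n}(U_n\times_U K)\bigr)$. Using \cref{lem:BaseChg}, the outer scheme becomes $K_n$, so the only remaining task is to identify the object inside the Weil restriction with $\pi^{K_n,*}_2 W_K$.

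That identification is a Cartesian-square argument: the iterated pullback of $W$ first to $U_n$ along $\pi^{U_n}_2$ and then to $K_n$ ought to coincide with the pullback of $W_K$ to $K_n$ along $\pi^{K_n}_2$. Drawing the three-dimensional diagram relating $U_n$, $K_n$, $U$ and $K$ with their projection maps, both composite squares present the same iterated fibre product of $W$, since nested fibre products can be rearranged up to canonical isomorphism.

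The main — and really only — obstacle is the bookkeeping required to confirm that pulling back $\pi^{U_n}_2$ along $K_n\to U_n$ recovers $\pi^{K_n}_2$. This should follow from the fact that in \cref{lem:BaseChg} the identification $K_n = U_n\times_U K$ is taken using the $\pi_1$ projection on the $U_n$ side, so the $\pi_2$'s are compatible under this base change. Once that compatibility is verified, the string of equalities in the statement follows immediately, and no further calculation is needed.
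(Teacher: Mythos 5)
Your proposal is correct and follows essentially the same route as the paper: unfold the definition, apply the base-change compatibility of Weil restriction from 7.6 in \cite{Bos} together with \cref{lem:BaseChg}, and then identify $(\pi^{U_n,*}_2 W)_{K_n}$ with $\pi^{K_n,*}_2 W_K$ via the Cartesian cube relating $U_n$, $K_n$, $U$, $K$ and the $\pi_2$ projections. Your remark about checking that the identification $K_n = U_n\times_U K$ (taken over $\pi_1$) is compatible with the $\pi_2$ maps is exactly the bookkeeping the paper's diagram encodes.
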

We must now impose the additional condition that $U$ is sufficiently small so that $\mathcal{A}$ is an abelian scheme over $U$. Then we can make the following definition.
\begin{definition}
\label{def:CritSch}
The $n$-th critical scheme of $\mathcal{X}$ over $U$ is
\begin{align*}
\text{Crit}^{n}(\mathcal{A},\mathcal{X}):=[p^n]_{*}(J^{n}(\mathcal{A}/U))\cap J^{n}(\mathcal{X}/U)
\end{align*}
where $[p^n]_{*}(J^{n}(\mathcal{A}/U))$ is the scheme theoretic image of $J^{n}(\mathcal{A}/U)$ by $[p^n]_{J^{n}(\mathcal{A}/U)}$.
\end{definition}
\begin{remark}
\label{rem:CritSch}
Note that because $\mathcal{A}$ is proper over $U$, it follows that $[p^n]_{*}(J^{n}(\mathcal{A}/U))$ is closed inside $J^{n}(\mathcal{A}/U)$, and also that $[p^n]_{*}(J^{n}(\mathcal{A}/U))\rightarrow\mathcal{A}$ is finite.
\end{remark}
There are maps
\begin{align*}
\cdots\rightarrow\text{Crit}^{2}(\mathcal{A},\mathcal{X})\rightarrow\text{Crit}^{1}(\mathcal{A},\mathcal{X})\rightarrow\mathcal{X}
\end{align*}
with each morphism finite. Hence we can finally define the exceptional schemes.
\begin{definition}
\label{def:ExcSch}
The $n$-th exceptional scheme $\text{Exc}^{n}(\mathcal{A},\mathcal{X})\subseteq\mathcal{X}$ over $U$ is the scheme theoretic image of the the morphism $ \text{Crit}^{n}(\mathcal{A},\mathcal{X})\rightarrow\mathcal{X}$.
\end{definition}
\begin{remark}
\label{rem:GnrExc}
Likewise, we can carry out this construction generically over $K$ in order to define $\textnormal{Crit}^{n}(A,X)$ and $\textnormal{Exc}^{n}(A,X)$.
\begin{prop}
\label{prop:ExcBaseChg}
We have that
\begin{align*}
\textnormal{Exc}^{n}(\mathcal{A},\mathcal{X})_{K}=\textnormal{Exc}^{n}(A,X)
\end{align*}
\end{prop}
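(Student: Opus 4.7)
The plan is to check the equality by working through each layer of the definition of $\text{Exc}^n(\mathcal{A},\mathcal{X})$ and verifying compatibility with the base change $\text{Spec}(K)\to U$ at each stage. Two types of operations appear: formation of Weil restrictions/jet schemes, which is handled by \cref{lem:JetBaseChg}, and formation of the scheme-theoretic image of a quasi-compact morphism. For the latter I would invoke the standard fact that scheme-theoretic image commutes with flat base change; the map $\text{Spec}(K)\to U$ is flat because $K$ is a localization of the coordinate ring of any affine open of $U$.

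Unpacking the definitions, \cref{lem:JetBaseChg} yields $J^n(\mathcal{A}/U)_K=J^n(A/K)$ and $J^n(\mathcal{X}/U)_K=J^n(X/K)$, and functoriality of $J^n$ applied to multiplication by $p^n$ shows that $[p^n]_{J^n(\mathcal{A}/U)}$ base-changes to $[p^n]_{J^n(A/K)}$. Since $J^n(\mathcal{A}/U)$ is of finite type over $U$, the map $[p^n]$ is quasi-compact, and flat base change for scheme-theoretic images gives
\begin{align*}
\bigl([p^n]_{*}(J^n(\mathcal{A}/U))\bigr)_{K}=[p^n]_{*}(J^n(A/K)).
\end{align*}

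Next, the intersection defining $\text{Crit}^n$ is a fibre product of closed subschemes over $J^n(\mathcal{A}/U)$, and fibre products commute with base change on the nose, so $\text{Crit}^n(\mathcal{A},\mathcal{X})_K=\text{Crit}^n(A,X)$. One final application of flat base change for scheme-theoretic images, now to the finite morphism $\text{Crit}^n(\mathcal{A},\mathcal{X})\to\mathcal{X}$, yields the desired equality $\text{Exc}^n(\mathcal{A},\mathcal{X})_K=\text{Exc}^n(A,X)$. The only conceptually substantive ingredient, and the one I would flag as the main obstacle, is flat base change for scheme-theoretic images; it applies here because every morphism in sight is between schemes of finite type over the Noetherian base $U$ (or $K$) and hence quasi-compact, so the hypothesis is automatic once observed.
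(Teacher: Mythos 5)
Your proposal is correct and follows essentially the same route as the paper: apply \cref{lem:JetBaseChg} to the jet schemes, use that scheme-theoretic image commutes with flat base change (both for $[p^n]_{*}$ and for the final image of $\text{Crit}^{n}(\mathcal{A},\mathcal{X})\rightarrow\mathcal{X}$), and observe that the intersection defining $\text{Crit}^{n}$ is preserved by pullback. You merely spell out the flatness and quasi-compactness hypotheses that the paper leaves implicit.
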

\begin{proof}
Noting that taking scheme theoretic image commutes with flat base change, the pullback $\text{Exc}^{n}(\mathcal{A},\mathcal{X})_{K}$ is equal to the scheme theoretic image of the morphism $\text{Crit}^{n}(\mathcal{A},\mathcal{X})_{K}\rightarrow X$, and so it suffices to compute
\begin{align*}
\text{Crit}^{n}(\mathcal{A},\mathcal{X})_{K}&=[p^n]_{*}(J^{n}(\mathcal{A}/U))_{K}\cap J^{n}(\mathcal{X}/U)_{K}\\
&=[p^n]_{*}(J^{n}(A/K))\cap J^{n}(X/K)\\
&=\text{Crit}^{n}(A,X)
\end{align*}
where we have used \cref{lem:JetBaseChg} in the second line.

\end{proof}
\end{remark}
Note that the exceptional schemes give us a descending chain of closed subschemes
\begin{align*}
\cdots\subseteq\text{Exc}^{n}(\mathcal{A},\mathcal{X})\subseteq\cdots\subseteq\text{Exc}^{1}(\mathcal{A},\mathcal{X})\subseteq\mathcal{X}
\end{align*}
By noetherianity this chain must stabilize at some $\text{Exc}^{N}(\mathcal{A},\mathcal{X})$. We set 
\begin{align*}
\text{Exc}(\mathcal{A},\mathcal{X}):=\cap_{n}\text{Exc}^{n}(\mathcal{A},\mathcal{X})=\text{Exc}^{N}(\mathcal{A},\mathcal{X})
\end{align*}
to be the intersection of all the exceptional schemes.
\section{Proving \cref{th:ExcSub}}
For this chapter we keep $U$ as it was previously, thereby allowing us to define the exceptional schemes over $U$.
\begin{prop}
\label{prop:ExcSub}
Assume that $X$ is not linear. Then there exists $m\in\mathbb{N}$ such that for all $Q\in\Gamma$ we have a strict inclusion
\begin{align*}
\textnormal{Exc}^{m}(\mathcal{A},\mathcal{X}^{+Q})\subsetneq \mathcal{X}^{+Q}
\end{align*}
\end{prop}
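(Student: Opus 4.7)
My proposal for \cref{prop:ExcSub} has three stages.

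\textbf{Stage 1: a linearity criterion.} The plan is to first establish that for any closed subscheme $\mathcal{Y} \subseteq \mathcal{A}$, the equality $\textnormal{Exc}(\mathcal{A}, \mathcal{Y}) = \mathcal{Y}$ forces $Y := \mathcal{Y}_K$ to be linear. Morally, the hypothesis says that every closed point of $\mathcal{Y}$ lifts to an $n$-jet lying in $[p^n]_{*}(J^{n}(\mathcal{A}))$ for every $n$; passing to the inverse limit, these compatible formal $p$-divisibilities are rigid enough (in the spirit of R\"ossler \cite{Ros}) to force $\mathcal{Y}$ to be a finite union of translates of abelian subschemes. Combined with the noetherian stabilisation of the descending chain $\textnormal{Exc}^{m}$, the contrapositive gives: if $Y$ is not linear then $\textnormal{Exc}^{N(\mathcal{Y})}(\mathcal{A}, \mathcal{Y}) \subsetneq \mathcal{Y}$ for some $N(\mathcal{Y})$.

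\textbf{Stage 2: translation formula.} Next I would verify that for $Q = p^{m} R$ with $R \in \mathcal{A}(U)$,
\[
\textnormal{Exc}^{m}(\mathcal{A}, \mathcal{X}^{+Q}) = \textnormal{Exc}^{m}(\mathcal{A}, \mathcal{X})^{+Q}.
\]
The point is that $\lambda^{\mathcal{A}}_{m}(Q) = [p^{m}]\lambda^{\mathcal{A}}_{m}(R)$ lies in $[p^{m}]_{*}(J^{n}(\mathcal{A}))$, so translation by $\lambda^{\mathcal{A}}_{m}(Q)$ preserves this subscheme; intersecting with the translated $J^{m}(\mathcal{X}^{+Q}) = J^{m}(\mathcal{X})^{+\lambda^{\mathcal{A}}_{m}(Q)}$ then gives the formula at the critical-scheme level, and hence at the exceptional-scheme level.

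\textbf{Stage 3: uniformity over $\Gamma$.} To obtain a single $m$ that works for all $Q \in \Gamma$, I would form the universal translate $\mathcal{X}_{\mathrm{univ}} \subseteq \mathcal{A} \times_{U} \mathcal{A}$, whose fiber over $Q$ is $\mathcal{X}^{+Q}$, and extend the construction of the exceptional schemes to this family. The resulting descending chain $\textnormal{Exc}^{m}_{\mathrm{univ}} \subseteq \mathcal{X}_{\mathrm{univ}}$ corresponds to an ascending chain of ideal sheaves on the noetherian scheme $\mathcal{X}_{\mathrm{univ}}$, stabilising at some level $N$. A base-change compatibility extending \cref{prop:ExcBaseChg} from $K$-fibers to $\mathcal{A}$-fibers then identifies the fiber of $\textnormal{Exc}^{N}_{\mathrm{univ}}$ over $Q$ with the stabilised $\textnormal{Exc}^{N}(\mathcal{A}, \mathcal{X}^{+Q})$. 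Since translation preserves linearity, every $\mathcal{X}^{+Q}$ is non-linear, so Stage 1 forces $\textnormal{Exc}^{N}(\mathcal{A}, \mathcal{X}^{+Q}) \subsetneq \mathcal{X}^{+Q}$ for every $Q \in \mathcal{A}(U)$, and in particular for every $Q \in \Gamma$. Thus $m = N$ suffices.

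\textbf{Main obstacle.} The hardest part is the linearity criterion of Stage 1: translating the jet-theoretic $p$-divisibility hypothesis into the global geometric conclusion that $\mathcal{Y}$ is linear is the deepest ingredient here, drawing on the full rigidity machinery behind the exceptional schemes. The base-change compatibility needed in Stage 3 --- verifying that forming Weil restrictions and scheme-theoretic images continues to behave well when pulling back from $\mathcal{A}$ to its various fibers, where the base change need no longer be flat --- is also a technical extension of \cref{prop:ExcBaseChg} that must be carried out with care.
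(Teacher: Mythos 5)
The paper's proof of \cref{prop:ExcSub} is a two-line appeal to R\"ossler: since $X$ is not linear and $A$ has $K/k$-trace zero, Theorem 1.2 of \cite{Ros} (a Manin--Mumford statement) shows that $X^{+Q}_{L}\cap\text{Tor}(A(L))$ is not Zariski dense for any extension $L/K$ and any $Q\in A(L)$, and Theorem 3.1 of \cite{Ros} then delivers exactly the uniform strict inclusion $\textnormal{Exc}^{m}(\mathcal{A},\mathcal{X}^{+Q})\subsetneq\mathcal{X}^{+Q}$. Your proposal instead tries to reconstruct both halves of R\"ossler's input, and both reconstructions have genuine gaps.

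First, your Stage 1 criterion is misstated: you never invoke the $K/k$-trace zero hypothesis, and without it $\textnormal{Exc}(\mathcal{A},\mathcal{Y})=\mathcal{Y}$ does not force $Y$ to be linear --- it only forces $Y$ to be ``special'' in R\"ossler's sense (cf.\ \cref{rem:TrZero}). Even with that hypothesis restored, this implication is the deep theorem in play (it is precisely where Manin--Mumford enters), and ``morally \dots rigid enough'' is a placeholder, not an argument. Second, Stage 3 does not close the uniformity gap. Scheme-theoretic image commutes only with flat base change, so the fibre of $\textnormal{Exc}^{m}_{\mathrm{univ}}$ over $Q$ contains, but may strictly exceed, $\textnormal{Exc}^{m}(\mathcal{A},\mathcal{X}^{+Q})$; stabilisation of the universal chain at level $N$ therefore implies neither that the fibrewise chains stabilise at $N$ nor that the fibre of $\textnormal{Exc}^{N}_{\mathrm{univ}}$ is proper in $\mathcal{X}^{+Q}$, which is what you actually need. (There is also the prior issue that the jet and critical constructions are built relative to the curve $U$, and it is not explained what they mean relative to the base $\mathcal{A}$ of your universal family.) Your Stage 2 translation formula $\textnormal{Exc}^{m}(\mathcal{A},\mathcal{X}^{+p^{m}R})=\textnormal{Exc}^{m}(\mathcal{A},\mathcal{X})^{+p^{m}R}$ is correct and genuinely useful, but on its own it reduces uniformity over $\Gamma$ to the finitely many cosets of $p^{m}\Gamma$ for an $m$ that has not yet been chosen --- a circularity you would still have to break. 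The efficient repair is simply to quote Theorems 1.2 and 3.1 of \cite{Ros}, which is what the paper does.
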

\begin{proof}
Since $X$ is assumed not to be linear, by Theorem 1.2. in \cite{Ros} this implies that for any field extension $L/K$ and any $Q\in A(L)$ the set $X^{+Q}_{L}\cap\text{Tor}(A(L))$ is not Zariski dense in $X^{+Q}_{L}$, where here we use the assumption that $A$ has $K/k$-trace zero. Hence the conditions of Theorem 3.1. in \cite{Ros} are met, which implies the proposition.

\end{proof}
\begin{proof}[Proof. (of \cref{th:ExcSub}).]
By \cref{prop:ExcSub} we have a strict containment over $U$:
\begin{align*}
\text{Exc}^{m}(\mathcal{A},\mathcal{X}^{+Q})\subsetneq \mathcal{X}^{+Q}
\end{align*}
and so if we set $V:=\mathcal{X}^{+Q}\setminus\text{Exc}^{m}(\mathcal{A},\mathcal{X}^{+Q})$, this is a non-empty open subscheme over $U$. Since $\mathcal{X}^{+Q}$ is flat over $U$, it follows that $V$ is too. Hence, the generic fibre $V_{K}$ must be non-empty, which implies
\begin{align*}
\text{Exc}^{m}(A,X^{+Q})\subsetneq X^{+Q}
\end{align*}
This completes the proof.

\end{proof}
\section{Proving \cref{th:ExcDist}}
For the duration of this chapter we suppose that $U$ contains the closed point $u$ corresponding to the discrete valuation $v$. This means that $\mathcal{A}$ may no longer be an abelian scheme, and hence we only have access to the the jet schemes over $U$, not the critical or exceptional schemes.
\begin{proof}[Proof. (of \cref{th:ExcDist})]
Since $d_{v}(X,P)\leq 1/p^n$, we have that $P\in \mathcal{X}(u_{n})$, where $u_{n}$ is the $n$-th infinitesimal neighbourhood of $u$.\\

Now consider $\widetilde{P}=\lambda^{A}_{k}(P)\in p^k J^{k}(A/K)(K)=[p^k]_{*}J^{k}(A/K)(K)$. If we apply $J^{k}(-/U)$ to the commutative diagram
\begin{center}
\begin{tikzcd}
u_{n} \arrow[r] \arrow[d]
& \mathcal{X} \arrow[hookrightarrow]{d}\\
U \arrow[r, "P"']
& \mathcal{A}
\end{tikzcd}
\end{center}
we get the diagram
\begin{center}
\begin{tikzcd}
u_{n} \arrow[r] \arrow[d]
& J^{k}(u_{n}/U) \arrow[r] \arrow[d]
& J^{k}(\mathcal{X}/U) \arrow[hookrightarrow]{d}\\
U \arrow[equal]{r} \arrow[bend right=25, "\widetilde{P}"']{rr}
& J^{k}(U/U) \arrow[r]
& J^{k}(\mathcal{A}/U)
\end{tikzcd}
\end{center}
where we have used the fact that
\begin{align*}
J^{k}(W/U)(u_{n})&=\text{Hom}_{U_{k}}(u_{n}\times_{U}U_{k},\pi^{U_{k},*}_{2}W)\\
&=\text{Hom}_{U_{k}}(u_{n}\times_{u}(u\times_{U}U_{k}),\pi^{U_{k},*}_{2}W)\\
&=\text{Hom}_{U_{k}}(u_{n}\times_{u}u_{k},\pi^{U_{k},*}_{2}W)\\
&=\text{Hom}_{u_{k}}(u_{n}\times_{u}u_{k},W_{u_{k}})\\
&=W(u_{n}\times_{u}u_{k})
\end{align*}
to define the map $u_{n}\rightarrow J^{k}(u_{n}/U)$ given by the projection $u_{n}\times_{u}u_{k}\rightarrow u_{n}$. Note also that the commutativity of the left hand square follows because $J^{k}(U/U)=U$ is terminal. Hence we have that $\widetilde{P}\in J^{k}(\mathcal{X}/U)(u_{n})$.\\

Thus this tells us, at least for $n$ sufficiently large, that
\begin{gather*}
d_{v}([p^n]_{*}J^{k}(A/K),\widetilde{P})=0\leq 1/p^{n}~~~~~d_{v}(J^{k}(X/K),\widetilde{P})\leq 1/p^{n}\\
\implies d_{v}(\text{Crit}^{k}(A,X),\widetilde{P})\leq 1/p^n
\end{gather*}
\begin{center}
\begin{tikzpicture}
\draw (0, 2) node{$J^{k}(A)$};
\draw (1, 0.5) node{$J^{k}(X)$};
\draw (-0.5, -0.25) node{$\bullet~\widetilde{P}$};
\draw plot [smooth cycle] coordinates {(1.4, -0.4) (-0.8, -0.9) (-1.5, -0.3) (-1.8, 0.5) (-0.2, 1.7) (1.8, 0.7)};
\draw plot [smooth] coordinates {(-1.5, 0) (-0.25, 0.3) (0.25, 0.1) (1.5, 0.2)};
\draw [red] (-0.6, 0.2) node{$\bullet$};
\draw [red] (-0.6, 0.55) node{$\text{Crit}^{k}(X)$};
\begin{scope}[shift={(5, 0)}]
\draw (0, 2) node{$A$};
\draw (1.2, 0.45) node{$X$};
\draw (-0.5, -0.25) node{$\bullet~P$};
\draw plot [smooth cycle] coordinates {(1.4, -0.4) (-0.8, -0.9) (-1.5, -0.3) (-1.8, 0.5) (-0.2, 1.7) (1.8, 0.7)};
\draw plot [smooth] coordinates {(-1.5, 0) (-0.25, 0.3) (0.25, 0.1) (1.5, 0.2)};
\draw [red] (-0.6, 0.2) node{$\bullet$};
\draw [red] (-0.6, 0.55) node{$\text{Exc}^{k}(X)$};
\end{scope}
\draw [->] (2, 0) -- (3, 0) node[pos=0.5, above]{$\Lambda^{A}_{k,0}$};
\end{tikzpicture}
\end{center}
since by definition $\text{Crit}^{k}(A,X)=[p^k]_{*}J^{k}(A/K)\cap J^{k}(X/K)$. So we can conclude that, since $\text{Crit}^{k}(A,X)\subseteq \Lambda^{A*}_{k,0}\text{Exc}^{k}(A,X)$, we have 
\begin{align*}
d_{v}(\text{Exc}^{k}(A,X),P)&=d_{v}(\Lambda^{A*}_{k,0}\text{Exc}^{k}(A,X),\widetilde{P})\\
&\leq d_{v}(\text{Crit}^{k}(A,X),\widetilde{P})\leq 1/p^n
\end{align*}

\end{proof}

\end{document}